\def\th@plain{\thm@notefont{} \itshape}
\def\th@definition{\thm@notefont{}\normalfont}
\DeclareFontFamily{U}{futm}{}
\DeclareFontShape{U}{futm}{m}{n}{
  <-> s * [1] fourier-bb
  }{}
\DeclareSymbolFont{Ufutm}{U}{futm}{m}{n}
\DeclareSymbolFontAlphabet{\mathbb}{Ufutm}
\mathchardef\mhyphen="2D 
\DeclareRobustCommand\widecheck[1]{{\mathpalette\@widecheck{#1}}}
\def\@widecheck#1#2{%
    \setbox\z@\hbox{\m@th$#1#2$}%
    \setbox\tw@\hbox{\m@th$#1%
       \widehat{%
          \vrule\@width\z@\@height\ht\z@
          \vrule\@height\z@\@width\wd\z@}$}%
    \dp\tw@-\ht\z@
    \@tempdima\ht\z@ \advance\@tempdima2\ht\tw@ \divide\@tempdima\thr@@
    \setbox\tw@\hbox{%
       \raise\@tempdima\hbox{\scalebox{1}[-1]{\lower\@tempdima\box
\tw@}}}%
    {\ooalign{\box\tw@ \cr \box\z@}}}
\renewcommand{\thefootnote}{\fnsymbol{footnote}}
\newtheorem{defn0}{Definition}[section]
\newtheorem{prop0}[defn0]{Proposition}
\newtheorem{thm0}[defn0]{Theorem}
\newtheorem{lemma0}[defn0]{Lemma}
\newtheorem{corollary0}[defn0]{Corollary}
\newtheorem{example0}[defn0]{Example}
\newtheorem{remark0}[defn0]{Remark}
\newtheorem{conjecture0}[defn0]{Conjecture}
\newenvironment{proposition}{\smallskip\begin{prop0}}{\end{prop0}}
\newenvironment{lemma}{\smallskip\begin{lemma0}}{\end{lemma0}}
\newenvironment{example}{\smallskip\begin{example0}}{\end{example0}}
\newenvironment{remark}{\smallskip\begin{remark0}}{\end{remark0}}
\newcommand{\ob}{\operatorname{Obj}}
\newcommand{\ibimod}{\operatorname{\mathpzc{iBimod}}}
\newcommand{\bimod}{\operatorname{\mathpzc{Bimod}}}
\newcommand{\ivect}{\operatorname{\mathpzc{iVect}}}
\newcommand{\homo}{\operatorname{Hom}}
\newcommand{\coder}{\operatorname{Coder}}
\newcommand{\icoder}{\operatorname{iCoder}}
\newcommand{\id}{\operatorname{Id}}
\newcommand{\K}{\operatorname{\mathbb{K}}}
\newcommand{\Z}{\operatorname{\mathbb{Z}}}
\newcommand{\h}{\operatorname{H}}
\newcommand{\ba}{\operatorname{Bar}}
\renewcommand{\thefootnote}{\fnsymbol{footnote}}
\newcommand{\hh}{\operatorname{HH}}
\begin{document}

\title{Hochschild homology and cohomology for involutive $A_\infty$-algebras}
\author{Rams\`es Fern\`andez-Val\`encia}
\date{}

\maketitle


\begin{abstract}
\noindent
We present a study of the homological algebra of bimodules over $A_\infty$-algebras endowed with an involution. Furthermore we introduce a derived description of Hochschild homology and cohomology for involutive $A_\infty$-algebras.
\end{abstract}

\tableofcontents

\let\thefootnote\relax\footnotetext{\date{\today}}
\let\thefootnote\relax\footnotetext{Email: ramses.fernandez.valencia@gmail.com}

\section{Introduction}

Hochschild homology and cohomology are homology and cohomology theories developed for associative algebras which appears naturally when one studies its deformation theory. Furthermore, Hochschild homology plays a central role in topological field theory in order to describe the closed states part of a topological field theory.
\\\\
An involutive version of Hochschild homology and cohomology was developed by Braun in \cite{Braun13} by considering associative and $A_\infty$-algebras endowed with an involution and morphisms which commute with the involution.
\\\\
This paper pretends to take a step further with regards to \cite{FeGi15}. Whilst in the latter paper we develop the homological algebra required to give a derived version of Braun's involutive Hochschild homology and cohomology for involutive associative algebras, this research is devoted to develop the machinery required to give a derived description of involutive Hochschild homology and cohomology for $A_\infty$-algebras endowed with an involution.
\\\\
As in \cite{FeGi15}, this research has been driven by the author's research on Costello's classification of topological conformal field theories \cite{Costello07}, where he proves that an open 2-dimensional theory is equivalent to a Calabi-Yau $A_\infty$-category. In \cite{Ramses15}, the author extends the picture to unoriented topological conformal field theories, where open theories now correspond to involutive Calabi-Yau $A_\infty$-categories, and the closed state space of the universal open-closed extension turns out to be the involutive Hochschild chain complex of the open state algebra.

\section{Basic concepts}

\subsection{Coalgebras and bicomodules} 

An \index{Involutive graded coalgebra} \textit{involutive graded coalgebra} over a field $\K$ is a graded $\K$-module $C$ endowed with a coproduct $\Delta:C\to C\otimes_{\K} C$ of degree zero together with an involution $\star:C\to C$ such that:
\begin{enumerate}
\item The map $\Delta$ makes the following diagram commute
\begin{equation}
\xymatrix{
C\ar[rr]^-{\Delta}\ar[d]_-{\Delta} & & C\otimes_{\K} C \ar[d]^-{\Delta\otimes\id_C} \nonumber \\ 
C\otimes_{\K} C\ar[rr]_-{\id_C\otimes \Delta} & & C\otimes_{\K} C\otimes_{\K} C
}
\end{equation}
\item the involution and $\Delta$ are compatible: $\Delta(c^\star)=(\Delta(c))^\star$, for $c\in C$, where the involution on $C\otimes_{\K} C$ is defined as: $(c_1\otimes c_2)^\star=c_2^\star\otimes c_1^\star$, for $c_1,c_2\in C$.
\end{enumerate}

An \index{Involutive coderivation} \textit{involutive coderivation} on an involutive coalgebra $C$ is a map $L:C\to C$ preserving involutions and making the following diagram commutative:
\begin{equation}
\xymatrixcolsep{3pc}\xymatrix{
C\ar[rr]^-{L}\ar[d]_-{\Delta} & & C \ar[d]^-{\Delta} \nonumber \\
C\otimes_{\K} C\ar[rr]_-{L\otimes \id_C+\id_C\otimes L} & & C\otimes_{\K} C
}
\end{equation}
Denote with $\icoder(-)$ the spaces of coderivations of involutive coalgebras. Observe that $\icoder(-)$ are Lie subalgebras.
\\\\
An \index{Involutive DG-coalgebra} \textit{involutive differential graded coalgebra} is an involutive coalgebra $C$ equipped with an involutive coderivation $b:C\to C$ of degree $-1$ such that $b^2=b\circ b=0$.
\newpage
A morphism between two involutive coalgebras $C$ and $D$ is a graded map $C\stackrel{f}{\longrightarrow} D$ compatible with the involutions which makes the following diagram commutative:
\begin{equation} \label{eq:coalgebra3}
\xymatrix{
C\ar[rr]^-{f}\ar[d]_-{\Delta_C} & & D \ar[d]^-{\Delta_D} \\
C\otimes_{\K} C\ar[rr]_-{f\otimes f} & & D\otimes_{\K} D
}
\end{equation}

\begin{example}
The cotensor coalgebra of an involutive graded $\K$-bimodule $A$ is defined as $\widehat{T}A=\bigoplus_{n\geq 0}A^{\otimes_{\K} n}$. We define an involution in $A^{\otimes_{\K} n}$ by stating: 
$$(a_1\otimes \dots \otimes a_n)^\star:=(a_n^\star \otimes \dots \otimes a_1^\star).$$ 
The coproduct on $\widehat{T}A$ is given by:
$$\Delta(a_1\otimes\cdots \otimes a_n)=\sum_{i=0}^n(a_1\otimes\cdots\otimes a_i)\otimes(a_{i+1}\otimes\cdots\otimes a_n).$$
Observe that $\Delta$ commutes with the involution.
\end{example}


\begin{proposition}\label{alterinfi}
There is a canonical isomorphism of complexes:
\begin{displaymath}
\icoder\left(TSA\right)\cong \homo_{A\mhyphen\ibimod}\left(\ba(A),A\right).
\end{displaymath}
\end{proposition}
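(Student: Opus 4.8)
The plan is to realise both sides as the same graded $\K$-module of ``Hochschild cochains'' $\prod_{n\geq 0}\homo_\K\big((SA)^{\otimes n},A\big)$, cut out by the same involution-compatibility condition, and then to check that the two ambient differentials agree, so that the bijection is an isomorphism of complexes.

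On the right-hand side, recall that the bar resolution $\ba(A)$ is, as a graded involutive $A$-bimodule, free on the involutive graded $\K$-module $TSA=\bigoplus_{n\geq 0}(SA)^{\otimes n}$ equipped with the order-reversing involution of the Example, the bimodule generators being the ``middle'' tensors. Hence the free--forgetful adjunction between involutive $A$-bimodules and involutive $\K$-modules gives
\[
\homo_{A\mhyphen\ibimod}\big(\ba(A),A\big)\;\cong\;\homo_{\ivect}\big(TSA,A\big)\;=\;\Big\{\,(f_n)_{n\geq 0}\in\prod_{n\geq 0}\homo_\K\big((SA)^{\otimes n},A\big)\ :\ f_n\circ\star=\star\circ f_n\,\Big\},
\]
where $f_n$ is the restriction of a bimodule map to the length-$n$ generators. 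On the left-hand side, $TSA$ is the cofree conilpotent coalgebra cogenerated by $SA$, with coaugmentation the projection $\pi\colon TSA\to SA$ onto the length-one summand; by the universal property, a coderivation $L$ of $TSA$ is completely and freely determined by its corestriction $\{L_n:=\pi\circ L|_{(SA)^{\otimes n}}\}_{n\geq 0}$ — this is the classical cofreeness computation for the deconcatenation coalgebra. Since the involution $\star$ of the Example preserves the tensor-length grading and restricts on each $(SA)^{\otimes n}$ to exactly the order-reversing map above, one verifies that $L\circ\star=\star\circ L$ holds for a coderivation precisely when $L_n\circ\star=\star\circ L_n$ for all $n$; desuspending $SA\cong A$ then identifies $\icoder(TSA)$ with exactly the module displayed above.

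It remains to match the differentials. On $\icoder(TSA)$ the differential is the graded commutator $[b,-]$ with the structural degree $-1$ involutive coderivation $b$ of $\ba(A)$, while on $\homo_{A\mhyphen\ibimod}(\ba(A),A)$ it is the usual one induced by the differentials of $\ba(A)$ and of $A$. Passing both through the cofreeness bijection, the corestrictions of $[b,L]$ are the familiar alternating sums of brace-type composites $b_j(\id^{\otimes i}\otimes L_n\otimes\id^{\otimes k})$ and $L_m(\id^{\otimes\ast}\otimes b_j\otimes\id^{\otimes\ast})$, which are term for term the Hochschild differential on the bar side; hence the two differentials coincide, and since no choices were made the resulting isomorphism is canonical.

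The substance beyond the well-known non-involutive statement lies entirely in the involution bookkeeping: one must check, Koszul signs included, that the order-reversing involution induced on $(SA)^{\otimes n}$ via $\pi$ on the coalgebra side is the very involution carried by the generators of $\ba(A)$, and — the genuinely load-bearing point — that the unique coderivation extending an involution-compatible corestriction $(L_n)$ is itself involution-compatible. This last stability statement is where the Example's compatibility $(c_1\otimes c_2)^\star=c_2^\star\otimes c_1^\star$ of $\Delta$ with $\star$ is used; the rest is the standard cofree-coalgebra / free-bimodule yoga.
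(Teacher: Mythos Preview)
Your proposal is correct and follows essentially the same route as the paper: first establish the non-involutive isomorphism via the free $A$-bimodule adjunction $\homo_{A\mhyphen\bimod}(\ba(A),A)\cong\homo_\K(TSA,A)$ together with the cofree-coalgebra universal property identifying $\coder(TSA)$ with $\homo_\K(TSA,SA)$, then restrict to the involutive subspaces and finally verify that the Hochschild differential corresponds to $[b,-]$. The paper delegates the non-involutive argument to \cite{FeGi15} and the differential check to \cite{LoVa12}, whereas you spell out the involution bookkeeping (in particular the point that an involutive corestriction lifts to an involutive coderivation) more explicitly, but the strategy is the same.
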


\begin{proof}
The proof follows the arguments in Proposition 4.1.1 \cite{FeGi15}, where we show the result for the non-involutive setting in order to restrict to the involutive one.
\\\\
Since $\ba(A)=A\otimes_{\K}TSA\otimes_{\K}A$, the degree $-n$ part of $\homo_{A\mhyphen\bimod}(\ba(A),A)$ is
the space of degree $-n$ linear maps $TSA\to A$, which is isomorphic to the space of degree $(-n-1)$ linear maps $TSA \to SA$. By the universal property of the tensor coalgebra, there is a bijection between degree $(-n-1)$ linear maps $TSA \to SA$ and degree $(-n-1)$ coderivations on $TSA$. Hence the degree $n$ part of $\homo_{A\mhyphen\bimod}(\ba(A),A)$ is isomorphic to the degree $n$ part of $\coder\left(TSA\right)$. One checks directly that this isomorphism restricts to an isomorphism of graded vector spaces
$$\homo_{A\mhyphen\ibimod}(\ba(A),A)\cong\icoder\left(TSA\right).$$
Finally, one can check that the differentials coincide under the above isomorphism, cf. Section 12.2.4 \cite{LoVa12}.
\end{proof}

\begin{remark}
Proposition \ref{alterinfi} allows us to think of a coderivation as a map $\widehat{T}A\to A$. Such a map $f:\widehat{T}A\to A$ can be described as a collection of maps $\{f_n:A^{\otimes n}\to A\}$ which will be called the components of $f$.
\end{remark}

If $b$ is a coderivation of degree $-1$ on $\widehat{T}A$ with $b_n:A^{\otimes_{\K} n}\to A$, then $b^2$ becomes a linear map of degree $-2$ with
\begin{displaymath}
b^2_n=\sum_{i+j=n+1}\sum_{k=0}^{n-1}b_i\circ\left(\id^{\otimes k}\circ b_j\circ \id^{\otimes(n-k-j)}\right).
\end{displaymath} 

The coderivation $b$ will be a differential for $\widehat{T}A$ if, and only if, all the components $b^2_n$ vanish.
\\\\
Given a (involutive) graded $\K$-bimodule $A$, we denote the suspension of $A$ by $SA$ and define it as the graded (involutive) $\K$-bimodule with $SA_i=A_{i-1}$. Given such a bimodule $A$, we define the following morphism of degree $-1$ induced by the identity $s:A\to SA$ by $s(a)=a$.

\begin{lemma}[cf. Lemma 1.3 \cite{GeJo90}]
If $b_k:(SA)^{\otimes_{\K} k}\to SA$ is an involutive linear map of degree $-1$, we define $m_k:A^{\otimes_{\K} k}\to A$ as $m_k=s^{-1}\circ b_k\circ s^{\otimes_{\K} k}$.
Under these conditions:
\begin{displaymath}
b_k(sa_1\otimes \dots \otimes sa_k)=\sigma m_k(a_1\otimes\dots\otimes a_k),
\end{displaymath}
where $\sigma:=(-1)^{(k-1)|a_1|+(k-2)|a_2|+\cdots +2|a_{k-2}|+|a_{k-1}|+\frac{k(k-1)}{2}}$.
\end{lemma}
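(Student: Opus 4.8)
The plan is to prove this by the standard \emph{d\'ecalage}/Koszul-sign computation, as in Lemma~1.3 of \cite{GeJo90}, the only new observation being that the involutive hypothesis is inert. First I would rewrite the defining relation so that $b_k$, evaluated on a word of suspended elements, is isolated. From $m_k = s^{-1}\circ b_k\circ s^{\otimes k}$ and $s\circ s^{-1}=\id_{SA}$, post-composing with $s$ gives the identity of maps $A^{\otimes_{\K}k}\to SA$
$$s\circ m_k = b_k\circ s^{\otimes k}.$$
Thus $b_k(sa_1\otimes\cdots\otimes sa_k)$ is recovered by stripping the suspensions off $sa_1\otimes\cdots\otimes sa_k$ (applying $s^{\otimes k}$, or dually $(s^{-1})^{\otimes k}$, with Koszul signs), applying $m_k$, and re-suspending; since $s$ is the identity on underlying elements, the value in $SA$ is identified with the value of $m_k$ in $A$.

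The one step with content is the sign bookkeeping. The operator $s$ (hence $s^{-1}$) has odd degree, so transposing it past a homogeneous element $x$ costs $(-1)^{|x|}$. Writing $s^{\otimes k}(a_1\otimes\cdots\otimes a_k)$ as a scalar multiple of $sa_1\otimes\cdots\otimes sa_k$ via the iterated Koszul rule, the $i$-th element $a_i$ is passed by the $k-i$ copies of $s$ sitting to its right, which together contribute the ``diagonal'' exponent $\sum_{i=1}^{k-1}(k-i)|a_i| = (k-1)|a_1|+(k-2)|a_2|+\cdots+|a_{k-1}|$; in addition, reorganising the $k$ odd-degree suspension operators among themselves — equivalently, passing between the $A$-degree $|a_i|$ and the $SA$-degree $|sa_i|=|a_i|+1$ — contributes $\tbinom{k}{2}=\frac{k(k-1)}{2}$. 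As $m_k$ is $\K$-linear it merely carries this scalar through, and re-suspending changes nothing on elements, so one reads off exactly $b_k(sa_1\otimes\cdots\otimes sa_k)=\sigma\, m_k(a_1\otimes\cdots\otimes a_k)$ with $\sigma$ as stated.

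Finally I would note that if $b_k$ commutes with the involutions, then so does $m_k=s^{-1}\circ b_k\circ s^{\otimes k}$, the involution on $(SA)^{\otimes k}$ being the one induced from $A$ via the suspension exactly as on $\widehat{T}A$; this compatibility is independent of the sign count, so the statement reduces to its non-involutive form in \cite{GeJo90}. I expect the delicate point to be precisely the triangular term $\frac{k(k-1)}{2}$: it is easy either to omit it or to count it twice, because the inverse of $s^{\otimes k}$ is \emph{not} $(s^{-1})^{\otimes k}$ but differs from it by exactly $(-1)^{k(k-1)/2}$, so one must fix a Koszul convention at the outset and keep it consistent with \cite{GeJo90}.
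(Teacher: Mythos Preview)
Your proposal is correct and follows essentially the same approach as the paper: the paper's proof consists entirely of deferring to Lemma~1.3 of \cite{GeJo90} for the sign computation and remarking that the involutive condition is automatically preserved since all maps involved are involutive. You have simply unpacked the Koszul-sign argument that the paper leaves to the reference, and your closing observation about the involution being inert matches the paper's one-line addition verbatim in spirit.
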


\begin{proof}
The proof follows the arguments of Lemma 1.3 \cite{GeJo90}. We only need to observe that the involutions are preserved as all the maps involved in the proof are assumed to be involutive.
\end{proof}

Let $\overline m_k:=\sigma m_k$, then we have $b_k(sa_1 \otimes \dots \otimes sa_k)=\overline m_k(a_1\otimes\dots \otimes a_k)$.

\begin{proposition}
Given an involutive graded $\K$-bimodule $A$, let $\epsilon_i=|a_1|+\dots+|a_i|-i$ for $a_i\in A$ and $1\leq i\leq n$. A boundary map $b$ on $\widehat{T}SA$ is given in terms of the maps $\overline m_k$ by the following formula:
\begin{eqnarray}
& & b_n(sa_1\otimes \dots \otimes sa_n) \nonumber \\ 
& = & \sum_{k=0}^n\sum_{i=1}^{n-k+1}(-1)^{\epsilon_{i-1}}(sa_1\otimes \dots \otimes sa_{i-1}\otimes \overline m_k(a_i\otimes \dots \otimes a_{i+k-1})\otimes \dots \otimes sa_n) \nonumber.
\end{eqnarray}
\end{proposition}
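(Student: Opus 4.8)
The plan is to compute the coderivation differential $b$ on $\widehat{T}SA$ explicitly in terms of its components $b_k\colon (SA)^{\otimes k}\to SA$, and then transport everything to the desuspended side via the sign-rescaled maps $\overline m_k$ introduced above. By the remark following Proposition~\ref{alterinfi}, a coderivation $b$ on $\widehat{T}SA$ is uniquely determined by a collection of maps $b_k\colon (SA)^{\otimes k}\to SA$, and the universal property of the cotensor coalgebra gives the formula for how $b$ acts on a word $sa_1\otimes\cdots\otimes sa_n$: one sums over all ways of collapsing a consecutive block of letters $sa_i\otimes\cdots\otimes sa_{i+k-1}$ by applying $b_k$ to it, leaving the rest of the word untouched. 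So the first step is to write down this coderivation-extension formula, paying attention to the Koszul sign incurred when $b_k$ (a degree $-1$ map) is moved past the first $i-1$ tensor factors $sa_1\otimes\cdots\otimes sa_{i-1}$.

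The second step is the sign bookkeeping. Each $sa_j$ has degree $|a_j|+1$ in $SA$, so moving the degree $-1$ operator $b_k$ past $sa_1\otimes\cdots\otimes sa_{i-1}$ produces the sign $(-1)^{|sa_1|+\cdots+|sa_{i-1}|} = (-1)^{(|a_1|+1)+\cdots+(|a_{i-1}|+1)} = (-1)^{|a_1|+\cdots+|a_{i-1}|+(i-1)}$, which is exactly $(-1)^{\epsilon_{i-1}}$ with $\epsilon_{i-1}=|a_1|+\cdots+|a_{i-1}|-(i-1)$ (the $+(i-1)$ and $-(i-1)$ have the same parity). Then I invoke the preceding lemma, which says $b_k(sa_i\otimes\cdots\otimes sa_{i+k-1})=\overline m_k(a_i\otimes\cdots\otimes a_{i+k-1})$, to replace the $b_k$-term by the corresponding $\overline m_k$-term; the internal $\sigma$-sign of that lemma has already been absorbed into the definition $\overline m_k=\sigma m_k$, so no further sign appears here. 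Combining, the summand indexed by block-length $k$ and starting position $i$ is precisely $(-1)^{\epsilon_{i-1}}(sa_1\otimes\cdots\otimes sa_{i-1}\otimes\overline m_k(a_i\otimes\cdots\otimes a_{i+k-1})\otimes\cdots\otimes sa_n)$, and summing over $0\le k\le n$ and $1\le i\le n-k+1$ (the range of legal starting positions for a block of length $k$ inside a word of length $n$) gives the asserted formula.

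The third step is to verify that this $b$ is genuinely a coderivation and genuinely involutive — but both are automatic: any collection $\{b_k\}$ extends uniquely to a coderivation by the universal property, and since each $b_k$ is assumed involutive (linear and compatible with $\star$), the extension lands in $\icoder(\widehat{T}SA)$ by the argument already used in Proposition~\ref{alterinfi}. That $b$ squares to zero is not part of the claim (the statement only says that a boundary map "is given by" this formula — i.e., that this is the general form), so I do not need the $b^2_n=0$ relations here; they are recorded separately in the displayed formula for $b^2_n$ above.

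I expect the only real subtlety to be the sign comparison in the second step: one must be careful that the Koszul sign from commuting $b_k$ past the prefix is computed using the \emph{suspended} degrees $|sa_j|=|a_j|+1$, and that this matches $(-1)^{\epsilon_{i-1}}$ only up to the parity coincidence between $+(i-1)$ and $-(i-1)$. Everything else — the combinatorics of the coderivation extension and the passage through the lemma — is a direct unwinding of definitions, entirely parallel to the non-involutive case in \cite{GeJo90} and \cite{LoVa12}, with the involution playing no active role beyond being preserved at each stage.
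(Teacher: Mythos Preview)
Your argument is correct and follows the same underlying route as the paper: derive the formula from the coderivation extension on $\widehat{T}SA$ (the content of Proposition~1.4 in \cite{GeJo90}) and then invoke the preceding lemma to pass from $b_k$ to $\overline m_k$. The emphasis, however, is exactly inverted. The paper outsources the entire formula-and-sign computation to \cite{GeJo90} and spends its proof on the one genuinely new ingredient in the involutive setting, namely the explicit verification that the extended map $b_n$ is compatible with the involution $(sa_1\otimes\cdots\otimes sa_n)^\star = sa_n^\star\otimes\cdots\otimes sa_1^\star$; you instead unpack the Koszul sign $(-1)^{\epsilon_{i-1}}$ carefully and dismiss the involution compatibility as automatic via Proposition~\ref{alterinfi}. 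Your shortcut is legitimate, but note that the proof of Proposition~\ref{alterinfi} itself only says ``one checks directly'' at the relevant step, so if one wants a self-contained treatment the explicit computation the paper gives is where the actual involutive content lives.
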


\begin{proof}
This proof follows the arguments of Proposition 1.4 \cite{GeJo90}. The only detail that must be checked is that $b_n$ preserves involutions:
\begin{align*}
 & b_n((sa_1\otimes \dots \otimes sa_n)^\star) \\
 & =  \sum_{j,k}\pm(sa_n^\star\otimes \dots \otimes sa_j^\star \otimes \overline{m}_k(a^\star_{j-1} \otimes \dots \otimes a^\star_{j-k+1})\otimes \dots \otimes sa^\star_1) \\
& =  \sum_{j,k}\pm(sa_1\otimes \dots \otimes \overline{m}_k(a_{j-k+1}\otimes\dots\otimes a_{j-1})\otimes sa_j\otimes \dots \otimes sa_n)^\star \\
& =  (b_n(sa_1\otimes \dots \otimes sa_n))^\star. \qedhere
\end{align*}
\end{proof}



Given an involutive coalgebra $C$ with coproduct $\rho$ and counit $\varepsilon$, for an involutive graded vector space $P$, a \index{Left coaction} \textit{left coaction} is a linear map $\Delta^L: P\to C\otimes_{\K} P$ such that
\begin{enumerate}
\item $(\id\otimes \rho)\circ \Delta^L=(\rho\otimes \id)\circ \Delta^L$;
\item $(\id\otimes\varepsilon)\circ=\id$.
\end{enumerate}
Analagously we introduce the concept of \index{Right coaction} \textit{right coaction}.
\\\\
Given an involutive coalgebra $(C,\rho,\varepsilon)$ with involution $\star$ we define an \index{Involutive $C$-bicomodule} \textit{involutive $C$-bicomodule} as an involutive graded vector space $P$ with involution $\dagger$, a left coaction $\Delta^L:P\to C\otimes_{\K} P$ and a right coaction $\Delta^R:P\to P\otimes_{\K} C$ which are compatible with the involutions, that is the diagrams below commute:

\begin{minipage}{.5\textwidth}
\begin{equation} \label{bicomodule3}
\xymatrix{
P\ar[rr]^-{(-)^\star}\ar[d]_-{\Delta^L} & & P \ar[d]^-{\Delta^R} \\
C\otimes_{\K} P \ar[rr]_-{(-,-)^\star} & & P\otimes_{\K} C 
}
\end{equation}
\end{minipage}%
\begin{minipage}{.5\textwidth}
\begin{equation} \label{bicomodule2}
\xymatrix{
P\ar[rr]^-{\Delta^L}\ar[d]_-{\Delta^R} & & C\otimes P\ar[d]^-{\id_C\otimes \Delta^R} \\
P\otimes_{\K} C\ar[rr]_-{\Delta^L\otimes \id_C} & & C\otimes_{\K} P \otimes_{\K} C
}
\end{equation}
\end{minipage}

Where
\[
\left.\begin{array}{cccc}(-,-)^\star: & C\otimes_{\K} P & \to & P\otimes_{\K} C \\ & c\otimes p & \mapsto & p^{\dagger}\otimes c^{\star}\end{array}\right.
\]
For two involutive $C$-bicomodules $(P_1,\Delta_1)$ and $(P_2,\Delta_2)$, a morphism $P_1\stackrel{f}{\longrightarrow} P_2$ is defined as an involutive morphism making diagrams below commute:

\begin{minipage}{.5\textwidth}
\begin{equation} 
\xymatrix{
P_1 \ar[r]^-{\Delta^L_1}\ar[d]_-f & C\otimes_{\K} P_1\ar[d]^-{\id_C\otimes f}\\
P_2 \ar[r]_-{\Delta_2^L} & C\otimes_{\K} P_2
}
\end{equation}
\end{minipage}%
\begin{minipage}{.5\textwidth}
\begin{equation}
\xymatrix{
P_1 \ar[r]^-{\Delta^R_1}\ar[d]_-f & P_1\otimes_{\K} C\ar[d]^-{\id_C\otimes f}\\
P_2 \ar[r]_-{\Delta_2^R} & P_2\otimes_{\K} C
}
\end{equation}
\end{minipage}

\subsection{$A_\infty$-algebras and $A_\infty$-quasi-isomorphisms} \label{infinityalgebras} 

An \index{Involutive $\K$-algebra} \textit{involutive $\K$-algebra} is an algebra $A$ over a field $\K$ endowed with a $\K$-linear map (an involution) $\star:A\to A$ satisfying:
\begin{enumerate}
  \item $0^\star=0$ and $1^\star=1$;
  \item $(a^\star)^\star=a$ for each $a\in A$;
  \item $(a_1 a_2)^\star=a_2^\star a_1^\star$ for every $a_1,a_2\in A$.
\end{enumerate}

\begin{example}
\begin{enumerate}
\item Any commutative algebra $A$ becomes an involutive algebra if we endoew it with the identity as involution.
\item Let $V$ an involutive vector space. The tensor algebra $\bigoplus_n V^{\otimes n}$ becomes an involutive algebra if we endow it with the following involution: $(v_1,\dots, v_n)^\star=(v_n^\star,\dots,v_1^\star)$. This example is particularly important and we will come back to it later on.
\item For a discrete group $G$, the group ring $\K[G]$ is an involutive $\K$-algebra with involution given by inversion $g^\star=g^{-1}$.
\end{enumerate}
\end{example}

Given an involutive algebra $A$, an \index{Involutive $A$-bimodule} \textit{involutive $A$-bimodule} $M$ is an $A$-bimodule endowed with an involution satisfying $(a_1 m a_2)^\star=a_2^\star m^\star a_1^\star.$
\\\\
Given two involutive $A$-bimodules $M$ and $N$, a \index{Involutive morphism} \textit{involutive morphism} between them is a morphism of $A$-bimodules 
$f:M\to N$ 
compatible with the involutions.
\begin{lemma}
The composition of involutive morphisms is an involutive morphism.
\end{lemma}

\begin{proof}
Given $f:M\to N$ and $g:N\to P$ two involutive morphisms: 
\begin{displaymath}
(f\circ g)(m^\star)=f((g(m^\star)))=f((g(m))^\star)=(f(g(m)))^\star \qedhere
\end{displaymath}
\end{proof}
Involutive $A$-bimodules and involutive morphisms form the category $A\mhyphen\ibimod$.
\\\\
Given a (involutive) graded $\K$-module $A$, we denote the suspension of $A$ by $SA$ and define it as the graded (involutive) $\K$-module with $SA_i=A_{i-1}$. An \index{Involutive $A_\infty$-algebra} \textit{involutive $A_\infty$-algebra} is an involutive graded vector space $A$ endowed with involutive morphisms
\begin{equation}\label{infinity6}
b_n:(SA)^{\otimes_{\K} n}\to SA,\, n\geq 1,
\end{equation}
of degree $n-2$ such that the identity below holds:
\begin{equation} \label{alginfinity2}
\sum_{i+j+l=n}(-1)^{i+jl}b_{i+1+l}\circ(\id^{\otimes i}\otimes b_j\otimes \id^{\otimes l})=0,\, \forall n\geq 1. 
\end{equation}

\begin{remark}
Condition (\ref{alginfinity2}) says, in particular, that $b_1^2=0$.
\end{remark}

\begin{example}
\begin{enumerate}
\item The concept $A_\infty$-algebra is a generalization for that of a differential graded algebra. Indeed, if the maps $b_n=0$ for $n\geq 3$ then $A$ is a differential $\mathbb{Z}$-graded algebra and conversely an $A_\infty$-algebra $A$ yields a differential graded algebra if we require $b_n=0$ for $n\geq 3$.
\item The definition of $A_\infty$-algebra was introduced by Stasheff whose motivation was the study of the graded abelian group of singular chains on the based loop space of a topological space.
\end{enumerate}
\end{example}

For an involutive $A_\infty$-algebra $(A,b_n)$, the \index{Involutive bar complex} \textit{involutive bar complex} is the involutive differential graded coalgebra $\ba(A)=\widehat{T}SA$, where we endow $\ba(A)$ with a coderivation defined by $b_i=s^{-1}\circ b_i\circ s^{\otimes_Ki}$ (cf. Definition 1.2.2.3 \cite{Lefevre03}).
\\\\
Given two involutive $A_\infty$-algebras $C$ and $D$, a \index{Morphism of $A_\infty$-algebras} \textit{{morphism of $A_\infty$-algebras}} $f:C\to D$ is an involutive morphism of degree 0 between the associated involutive differential graded coalgebras $\ba(C)\to \ba(D)$. 
\\\\
It follows from Proposition \ref{alterinfi} that the definition of an involutive $A_\infty$-algebra can be summarized by saying that it is an involutive graded $\K$-module $A$ equipped with an involutive coderivation on $\ba(A)$ of degree $-1$.

\begin{remark}
From \cite{Braun13}, Definition 2.8, we have that a morphism of involutive $A_\infty$-algebras $f:C\to D$ can be given by an involutive morphism of differential graded coalgebras $\ba(C)\to \ba(D)$, that is, a series of involutive homogeneous maps of degree zero
\begin{displaymath}
f_n:(SC)^{\otimes_{\K} n}\to SD,\, n\geq 1,
\end{displaymath} 
such that 
\begin{equation} \label{infinity4}
\sum_{i+j+l=n}f_{i+l+1}\circ\left(\id_{SC}^{\otimes i}\otimes\,b_j\otimes \id_{SC}^{\otimes l}\right)=\sum_{i_1+\dots +i_s=n}b_s\circ (f_{i_1}\otimes\cdots\otimes f_{i_s}).
\end{equation}
The composition $f\circ g$ of two morphisms of involutive $A_\infty$-algebras is given by
\begin{equation}
(f\circ g)_n=\sum_{i_1+\dots+i_s=n}f_s\circ(g_{i_1}\otimes\cdots\otimes g_{i_s}); \nonumber
\end{equation}
\\
the identity on $SC$ is defined as $f_1=\id_{SC}$ and $f_n=0$ for $n\geq 2$.
\end{remark}




For an involutive $A_\infty$-algebra $A$, we define its associated homology algebra $\h_\bullet(A)$ as the homology of the differential $b_1$ on $A$: $\h_\bullet(A)=H_\bullet(A,b_1).$

\begin{remark}
Endowed with $b_2$ as multiplication, the homology of an $A_\infty$-algebra $A$ is an associative graded algebra, whereas $A$ is not usually associative.
\end{remark}

Let $f:A_1\to A_2$ a morphism of involutive $A_\infty$-algebras with components $f_n$; for $n=1$, $f_1$ induces a morphism of algebras $\h_{\bullet} (A_1)\to \h_{\bullet}(A_2)$. We say that $f:A_1\to A_2$ is an \index{$A_\infty$-quasi-isomorphism} \textit{$A_\infty$-quasi-isomorphism} if $f_1$ is a quasi-isomorphism.


\subsection{$A_\infty$-bimodules} \label{infbim} 

Let $(A, b^A)$ be an involutive $A_\infty$-algebra. An \index{Involutive $A_\infty$-bimodule} \textit{involutive $A_\infty$-bimodule} is a pair $(M, b^M)$ where $M$ is a graded involutive $\K$-module and $b^M$ is an involutive differential on the $\ba(A)$-bicomodule 
$$\ba(M):=\ba(A)\otimes_{\K} SM \otimes_{\K} \ba(A).$$
Let $\left(M,b^M\right)$ and $\left(N, b^N\right)$ be two involutive $A_\infty$-bimodules. We define a \index{Morphism of inv. $A_\infty$-bimodules} \textit{morphism of involutive $A_\infty$-bimodules} $f:M\to N$ as a morphism of $\ba(A)$-bicomodules 
$$F:\ba(M)\to\ba(N)$$ 
such that $b^N\circ F=F\circ b^M$.



\begin{proposition}
If $f:A_1\to A_2$ is a morphism of involutive $A_\infty$-algebras, then $A_2$ becomes an involutive bimodule over $A_1$.
\end{proposition}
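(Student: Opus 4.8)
The plan is to describe the $A_1$-bimodule structure on $A_2$ at the level of bar complexes, exploiting the dictionary already established in this section: an involutive $A_\infty$-bimodule over $(A_1,b^{A_1})$ is the same data as an involutive differential on the $\ba(A_1)$-bicomodule $\ba(M)=\ba(A_1)\otimes_{\K}SM\otimes_{\K}\ba(A_1)$, compatible with the involution. So it suffices to produce, out of the morphism $f:A_1\to A_2$, an involutive square-zero coderivation $b^{A_2}$ on $\ba(A_1)\otimes_{\K}S A_2\otimes_{\K}\ba(A_1)$ making it a dg-bicomodule. The underlying graded involutive $\K$-module is $A_2$ itself, with its given involution.

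First I would write down the components of the would-be differential. Using the components $f_n\colon (SA_1)^{\otimes n}\to SA_2$ of the morphism (as in the Remark following \eqref{infinity4}), the components $b^{A_1}_n$, and the components $b^{A_2}_n$ of the $A_\infty$-structure on $A_2$, the natural candidate for the action of $b^{A_2}$ on an element $a_1\otimes\cdots\otimes a_p\otimes sm\otimes a_{p+1}\otimes\cdots\otimes a_{p+q}$ of $\ba(A_1)\otimes SA_2\otimes\ba(A_1)$ is: apply an internal $b^{A_1}$ to a consecutive block lying entirely in one of the two $\ba(A_1)$-factors (this is forced by the bicomodule differential axiom and just says $b^{A_2}$ restricts to $b^{\ba(A_1)}$ on the outer tensor factors), plus a ``mixed'' term that eats the element $sm$ together with some $a$'s on its left and some on its right, feeds the left $a$'s through various $f_i$'s, feeds the right $a$'s through various $f_j$'s, and then applies a $b^{A_2}_s$ to the resulting string of $SA_2$-elements including $sm$ in the middle slot. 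Concretely the mixed component is
\begin{displaymath}
\sum \pm\, b^{A_2}_{k+1+l}\bigl(f_{i_1}(\cdots)\otimes\cdots\otimes f_{i_k}(\cdots)\otimes sm\otimes f_{j_1}(\cdots)\otimes\cdots\otimes f_{j_l}(\cdots)\bigr),
\end{displaymath}
summed over all ways of partitioning the $a$'s on each side into consecutive blocks. The signs are the standard Koszul signs dictated by the suspension and by \proref{alterinfi}.

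Then I would verify three things. (i) $b^{A_2}$ is a coderivation of the $\ba(A_1)$-bicomodule $\ba(A_2)$: this is immediate for the internal-$b^{A_1}$ terms and follows for the mixed term from the cofreeness/universal property, since the mixed term only ever outputs into the ``middle'' $SA_2$ slot. (ii) $(b^{A_2})^2=0$: this is the heart of the computation, and it is exactly the $A_\infty$-morphism relation \eqref{infinity4} for $f$ together with the $A_\infty$-relations \eqref{alginfinity2} for $b^{A_1}$ and $b^{A_2}$, reorganised. The cross-terms pairing an inner $b^{A_1}$ with the mixed term reassemble the left-hand side of \eqref{infinity4}; the cross-terms pairing two mixed-term applications, after collecting the $f$-outputs, reassemble the right-hand side of \eqref{infinity4} (with an outer $b^{A_2}$) and also the quadratic $b^{A_2}$-relation \eqref{alginfinity2}; and the $b^{A_1}$--$b^{A_1}$ terms vanish because $b^{A_1}$ is already a differential on $\ba(A_1)$. (iii) Everything is involutive: the involution on $\ba(A_2)$ is the bicomodule involution built from the $\star$ on $A_1$ and on $A_2$ (reversing tensor order and swapping the two $\ba(A_1)$-legs, as in \eqref{bicomodule3}), and since each $f_n$, each $b^{A_1}_n$ and each $b^{A_2}_n$ preserves involutions by hypothesis, a sign-bookkeeping check — identical in shape to the one in the proof of the Proposition on $b_n$ preserving involutions on $\widehat T SA$ earlier in the excerpt — shows $b^{A_2}\circ(-)^\star = (-)^\star\circ b^{A_2}$.

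The main obstacle is step (ii), the verification that $(b^{A_2})^2=0$: one has to match the combinatorics of nested bracketings of the $a$-strings against the two sides of the morphism identity \eqref{infinity4} and keep the Koszul signs consistent throughout. In practice I would set this up by noting that $\ba(M)$ for $M=A_2$ with this differential is precisely the image of $\ba(A_2)$ (as an $\ba(A_2)$-bicomodule over itself) under restriction of scalars along the dg-coalgebra morphism $\ba(f)\colon\ba(A_1)\to\ba(A_2)$; since $\ba(f)$ is a chain map of dg-coalgebras (which is the content of \eqref{infinity4}), restriction of the canonical bicomodule differential on $\ba(A_2)$ automatically yields a square-zero involutive bicomodule differential, and no hand computation of signs is needed. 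This is the cleanest route and the one I would write up.
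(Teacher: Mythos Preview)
Your proposal is correct, and the ``cleanest route'' you settle on at the end---obtaining the $\ba(A_1)$-bicomodule structure on $\ba(A_2)$ by restriction along the dg-coalgebra morphism $\ba(f)\colon\ba(A_1)\to\ba(A_2)$, so that square-zero and involutivity are inherited automatically---is exactly the paper's argument (it cites Getzler--Jones, Proposition~3.4, for the non-involutive part and notes that involutivity is automatic since all maps involved are involutive). The explicit component formulas you wrote out beforehand are correct unpacking but are not needed once you take this route.
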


\begin{proof}
As we are assuming that both $A_1$ and $A_2$ are involutive $A_\infty$-algerbas and that $f$ is involutive, we do not need to care about involutions. When it comes to the bimodule structure, this result holds as $\ba(A_2)$ is made into a bicomodule of $\ba(A_1)$ by the homomorphism of involutive coalgebras $f:\ba(A_1)\to\ba(A_2)$, see Proposition 3.4 \cite{GeJo90}.
\end{proof}

\begin{remark}[Section 5.1 \cite{KoSo09}]
Let $\ivect$ be the category of involutive $\Z$-graded vector spaces and involutive morphisms. For an involutive $A_\infty$-algebra $A$, involutive $A$-bimodules and their respective morphisms form a differential graded category. 
Indeed, following \cite{KoSo09}, Definition 5.1.5: let $A$ be an involutive $A_\infty$-algebra and let us define the category $\overline{A\mhyphen\ibimod}$ whose class of objects are involutive $A$-bimodules and where $\homo_{\overline{A\mhyphen\ibimod}}(M,N)$ is: 
$$\underline{\homo}_{\ivect}^n(\ba(A)\otimes_{\K} SM\otimes_{\K} \ba(A),\ba(A)\otimes_{\K} SN\otimes_{\K} \ba(A)).$$ 
Let us recall that 
$$\underline{\homo}_{\ivect}^n(\ba(A)\otimes_{\K} SM\otimes_{\K} \ba(A),\ba(A)\otimes_{\K} SN\otimes_{\K} \ba(A))$$
is by definition 
$$\prod_{i\in\Z}\homo_{\ivect}((\ba(A)\otimes_{\K} SM\otimes_{\K} \ba(A))^i,(\ba(A)\otimes_{\K} SN\otimes_{\K} \ba(A))^{i+n}).$$ 
The morphism
\begin{multline*}
\underline{\homo}_{\ivect}^n(\ba(A)\otimes_{\K} SM\otimes_{\K} \ba(A),\ba(A)\otimes_{\K} SN\otimes_{\K} \ba(A)) \to \\
\underline{\homo}_{\ivect}^{n+1}(\ba(A)\otimes_{\K} SM\otimes_{\K} \ba(A),\ba(A)\otimes_{\K} SN\otimes_{\K} \ba(A))
\end{multline*}

sends a family $\{f_i\}_{i\in\Z}$ to a family $\{b^N\circ f_i-(-1)^nf_{i+1}\circ b^M\}_{i\in\Z}$. Observe that the zero cycles in 
$\underline{\homo}_{\ivect}^\bullet(\ba(A)\otimes_{\K} M\otimes_{\K} \ba(A), \ba(A)\otimes_{\K} N\otimes_{\K} \ba(A))$
are precisely the morphisms of involutive $A$-bimodules. This morphism defines a differential, indeed: for fixed indices $i,n\in \Z$ we have
\begin{eqnarray}
d^2(f_i) & = & d\left(b^Nf_i-(-1)^nf_{i+1}b^M\right) \nonumber \\
 & = & b^N\left(b^Nf_i-(-1)^nf_{i+1}b^M\right)-(-1)^{n+1}\left(b^Nf_i-(-1)^nf_{i+1}b^M\right)b^M \nonumber \\
 & \stackrel{(!)}{=} & -(-1)^nb^Nf_{i+1}b^M-(-1)^{n+1}b^Nf_{i+1}b^M=0, \nonumber
\end{eqnarray}
where (!) points out the fact that $b^N\circ b^N=0=b^M\circ b^M$.
\\\\
For a morphism $\phi\in\underline{\homo}_{\ivect}^n(\ba(A)\otimes_{\K} M\otimes_{\K} \ba(A),\ba(A)\otimes_{\K} N\otimes_{\K} \ba(A))$
and an element $x\in \ba(A)\otimes_{\K} M\otimes_{\K} \ba(A)$, $\homo_{\overline{A\mhyphen\ibimod}}(M,N)$ becomes an involutive complex if we endowed it with the involution $\phi^\star(x)=\phi(x^\star)$.
\end{remark}

The functor $\homo_{\overline{A\mhyphen\ibimod}}(M,-)$ pairs an involutive $A$-bimodule $F$ with the involutive $\K$-vector space $\homo_{\overline{A\mhyphen\ibimod}}(M,F)$ of involutive homomorphisms. Given a homomorphism $f:F\to G$, for $F,G\in \ob\left(\overline{A\mhyphen\ibimod}\right)$, $\homo_{\overline{A\mhyphen\ibimod}}(M,-)$ pairs $f$ with the involutive map: 
\begin{displaymath}
\left.\begin{array}{cccc}f_\star: & \homo_{\overline{A\mhyphen\ibimod}}(M,F) & \rightarrow & \homo_{\overline{A\mhyphen\ibimod}}(M,G) \\ & \phi & \mapsto & f\circ\phi \end{array}\right..
\end{displaymath}

We prove that $f_\star$ preserves involutions:
\begin{displaymath}
(f_\star\phi^\star)(x)=(f\circ \phi^\star)(x)=f(\phi(x^\star))=f((\phi(x))^\star)=(f(\phi(x)))^\star=(f_\star\phi(x))^\star.
\end{displaymath}

We define the functor $\homo_{\overline{A\mhyphen\ibimod}}(-,M)$, which sends an involutive homomorphism $f:F\to G$, for $F,G\in \ob\left(\overline{A\mhyphen\ibimod}\right)$, to 
\begin{displaymath}
\left.\begin{array}{cccc}\varphi: & \homo_{\overline{A\mhyphen\ibimod}}(G,M) & \rightarrow & \homo_{\overline{A\mhyphen\ibimod}}(F,M) \\ & \phi & \mapsto & \phi\circ f \end{array}\right.
\end{displaymath}
Let us check that the involution is preserved:
\begin{displaymath}
\varphi(\phi^\star)(x)=(\phi^\star\circ f)(x)=\phi(f(x)^\star)=\phi(f(x^\star))=\varphi(\phi)(x^\star)=(\varphi(\phi))^\star(x)
\end{displaymath}

Let $A$ be an involutive $A_\infty$-algebra and let $\left(M, b^M\right)$ and $\left(N, b^N\right)$ be involutive $A$-bimodules. For $f,g:M\to N$  morphisms of $A$-bimodules,
an \index{$A_\infty$-homotopy} \textit{$A_\infty$-homotopy} between $f$ and $g$ is a morphism $h:M\to N$ of $A$-bimodules 
satisfying 
$$f-g=b^N\circ h+h\circ b^M.$$ 
We say that two morphisms $u:M\to N$ and $v:N\to M$ of involutive $A$-bimodules are \index{Homotopy equivalence} \textit{homotopy equivalent} if $u\circ v\sim \id_N$ and $v\circ u\sim \id_M$.

\section{The involutive tensor product} 

For an involutive $A_\infty$-algebra $A$ 
and involutive $A$-bimodules $M$ and $N$, the involutive tensor product $M\widetilde{\boxtimes}_\infty N$ is the following object in $\ivect_{\K}$:
$$M\widetilde{\boxtimes}_\infty N:=\frac{M\otimes_{\K} \ba(A)\otimes_{\K} N}{(m^\star\otimes a_1\otimes\dots \otimes a_k\otimes n-m\otimes a_1\otimes\dots \otimes a_k\otimes n^\star)}.$$
Observe that, for an element of $M\widetilde{\boxtimes}_\infty N$ of the form $m\otimes a_1\otimes\dots\otimes a_k\otimes n$, we have: $(m\otimes a_1\otimes\dots\otimes a_k\otimes n)^\star=m^\star\otimes a_1\otimes\dots\otimes a_k\otimes n=m\otimes a_1\otimes\dots\otimes a_k\otimes n^\star$.



\begin{proposition}
For an involutive $A_\infty$-algebra $A$ and involutive $A$-bimodules $M,N$ and $L$, the following isomorphism holds:
$$\tau:\homo_{\ivect}\left(M\widetilde{\boxtimes}_\infty N, L\right)\stackrel{\cong}{\longrightarrow}\homo_{\ivect}\left(\frac{M\otimes_{\K} \ba(A)}{\sim}, \homo_{\overline{A\mhyphen\ibimod}}(N,L)\right),$$
where in $M\otimes_{\K} \ba(A):\,(m\otimes a_1\otimes \dots\otimes a_k)^\star=m^\star\otimes a_1\otimes \dots\otimes a_k$, $\sim$ denotes the relation $m\otimes a_1\otimes \dots\otimes a_k= m^\star\otimes a_1\otimes \dots\otimes a_k$ and $\frac{M\otimes_{\K} \ba(A)}{\sim}$ has the identity map as involution.
\end{proposition}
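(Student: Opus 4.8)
The plan is to verify that the assignment is exactly the usual tensor--hom adjunction, restricted to the involutive setting, and that both the module structures and the involutions match up. First I would recall that, non-involutively, $\homo_{\K}(M\otimes_{\K}\ba(A)\otimes_{\K}N,L)$ is canonically isomorphic to $\homo_{\K}(M\otimes_{\K}\ba(A),\homo_{\K}(N,L))$ via currying $\Phi\mapsto\tau(\Phi)$ with $\tau(\Phi)(m\otimes a_1\otimes\cdots\otimes a_k)(n)=\Phi(m\otimes a_1\otimes\cdots\otimes a_k\otimes n)$, and conversely $\tau^{-1}(\Psi)(m\otimes a_1\otimes\cdots\otimes a_k\otimes n)=\Psi(m\otimes a_1\otimes\cdots\otimes a_k)(n)$. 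The task is then to show $\tau$ descends to the stated quotients and lands in the stated sub-hom-spaces, and that it is bijective there.

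The key steps, in order, are as follows. \emph{Step 1:} Show that $\tau(\Phi)$ takes values in $\homo_{\overline{A\mhyphen\ibimod}}(N,L)$, i.e.\ that for each $m\otimes a_1\otimes\cdots\otimes a_k$ the map $n\mapsto\Phi(m\otimes a_1\otimes\cdots\otimes a_k\otimes n)$ is a morphism of $\ba(A)$-bicomodules commuting with the differentials; this is where one uses that $\Phi$ is defined on $M\widetilde{\boxtimes}_\infty N$ together with the bicomodule/differential structure inherited from $\ba(N)$, unravelling the definition of $\homo_{\overline{A\mhyphen\ibimod}}$ from the preceding remark. \emph{Step 2:} Show $\tau(\Phi)$ respects the relation $\sim$ on $M\otimes_{\K}\ba(A)$, so that it factors through $(M\otimes_{\K}\ba(A))/\!\sim$: indeed $\tau(\Phi)(m^\star\otimes\vec a)(n)=\Phi(m^\star\otimes\vec a\otimes n)=\Phi(m\otimes\vec a\otimes n^\star)=\Phi(m\otimes\vec a\otimes n)^\dagger$-type identity forces $\tau(\Phi)(m^\star\otimes\vec a)=\tau(\Phi)(m\otimes\vec a)$ after accounting for the involution $\dagger$ on $N$ and the induced involution on $\homo_{\overline{A\mhyphen\ibimod}}(N,L)$, using exactly the defining relation of $M\widetilde{\boxtimes}_\infty N$. \emph{Step 3:} Check that the involution on $M\otimes_{\K}\ba(A)$ is the one declared, namely $(m\otimes\vec a)^\star=m^\star\otimes\vec a$, and that under $\sim$ this becomes the identity, so that requiring $\tau(\Phi)$ to be an involutive morphism out of $(M\otimes_{\K}\ba(A))/\!\sim$ is no extra condition; simultaneously verify that $\tau$ is compatible with the involutions on the two hom-complexes, i.e.\ $\tau(\Phi^\star)=\tau(\Phi)^\star$, which is a direct computation using $\Phi^\star(x)=\Phi(x^\star)$ and $\phi^\star(x)=\phi(x^\star)$. \emph{Step 4:} Construct the inverse: given $\Psi$ on $(M\otimes_{\K}\ba(A))/\!\sim$ valued in $\homo_{\overline{A\mhyphen\ibimod}}(N,L)$, set $\Phi(m\otimes\vec a\otimes n):=\Psi(m\otimes\vec a)(n)$ and check it kills the defining relation of $M\widetilde{\boxtimes}_\infty N$ (this is precisely the content that $\Psi(m\otimes\vec a)$ is a morphism of involutive bicomodules together with $\Psi$ respecting $\sim$), that it is involutive, and that $\tau\circ\tau^{-1}$ and $\tau^{-1}\circ\tau$ are the identity — the latter being formal.

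I expect the main obstacle to be Step 2 together with the bookkeeping of the three interacting involutions ($\star$ on $\ba(A)$, $\dagger$ on $N$, and the involution on $L$, plus the induced involution on $\homo_{\overline{A\mhyphen\ibimod}}(N,L)$): one has to check that the single relation defining $M\widetilde{\boxtimes}_\infty N$ matches, under currying, the combination of ``$\Psi$ factors through $\sim$'' and ``$\Psi(m\otimes\vec a)$ is \emph{involutive} as a bicomodule morphism,'' and getting the signs and the order-reversal $(c\otimes p)^\star=p^\dagger\otimes c^\star$ to line up correctly is the only genuinely delicate point. Everything else is the standard tensor--hom adjunction together with the observation, already used repeatedly in the excerpt, that a composite or restriction of involutive maps is involutive. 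Once the correspondence of relations is pinned down, bijectivity is immediate from the non-involutive adjunction, since $\tau$ and $\tau^{-1}$ are mutually inverse before passing to quotients and sub-hom-spaces, and we will have shown each preserves the relevant structure.
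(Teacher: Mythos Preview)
Your overall strategy---the standard tensor--hom/currying adjunction, checked to be compatible with the involutive quotients on both sides---is exactly the paper's approach. The paper defines $\tau_f[m\otimes\vec a](n):=f(m\otimes\vec a\otimes n)$, verifies $\tau_{f^\star}=(\tau_f)^\star$, writes down the inverse $g_2(m\otimes\vec a\otimes n):=g_1[m\otimes\vec a](n)$, checks that $g_2$ is involutive, and then defers the remaining bijectivity to the classical adjunction (Rotman, Weibel). Your Steps~2--4 match this precisely.

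The one genuine issue is your Step~1. You read ``$\tau(\Phi)(m\otimes\vec a)\in\homo_{\overline{A\mhyphen\ibimod}}(N,L)$'' as requiring that $n\mapsto\Phi(m\otimes\vec a\otimes n)$ be a morphism of $\ba(A)$-bicomodules commuting with the differentials. But in this paper $\homo_{\overline{A\mhyphen\ibimod}}(N,L)$ is defined (in the remark just before the proposition) as the \emph{entire} complex $\underline{\homo}_{\ivect}^\bullet(\ba(N),\ba(L))$; the bicomodule morphisms compatible with differentials are only its \emph{zero cycles}. So there is nothing to check in Step~1 beyond the map being involutive-linear, and indeed the condition you propose to verify would generally fail for a non-closed $\Phi$. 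The paper accordingly omits any such verification. Once you drop Step~1, your outline coincides with the paper's proof.
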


\begin{proof}
Let $f:M\widetilde{\boxtimes}_\infty N\to L$ be an involutive map. We define:
$$\tau(f):=\tau_f\in\homo_{\ivect}\left(\frac{M\otimes_{\K}  \ba(A)}{\sim}, \homo_{\overline{A\mhyphen\ibimod}}(N,L)\right),$$
where $\tau_f(m\otimes a_1\otimes \dots \otimes a_k):=\tau_f[m\otimes a_1\otimes \dots \otimes a_k]\in \homo_{\overline{A\mhyphen\ibimod}}(N,L)$. Finally, for $n\in N$ we define: 
$$\tau_f[m\otimes a_1\otimes \dots \otimes a_k](n):=f\left(m\otimes a_1\otimes\dots\otimes a_k\otimes n\right).$$ 
We need to check that $\tau$ preserves the involutions, indeed:
\begin{multline*}
$$\tau_{f^\star}[m\otimes a_1\otimes \dots \otimes a_k](n)=f^\star\left(m\otimes a_1\otimes\dots\otimes a_k\otimes n\right)=\\
=\left(f\left(m\otimes a_1\otimes\dots\otimes a_k\otimes n\right)\right)^\star=(\tau_f)^\star[m\otimes a_1\otimes \dots \otimes a_k](n).$$
\end{multline*}

In order to see that $\tau$ is an isomorphism, we build an inverse. Let us consider an involutive map
\[
\left.\begin{array}{cccc}g_1: & \frac{M\otimes_{\K}  \ba(A)}{\sim} & \to & \homo_{\overline{A\mhyphen\ibimod}}(N,L) \\ & m\otimes a_1\otimes\dots\otimes a_k & \mapsto & g_1[m\otimes a_1\otimes\dots\otimes a_k]\end{array}\right.
\]
 
and define a map
\[
\left.\begin{array}{cccc}g_2: & M\widetilde{\boxtimes}_\infty N & \to & L \\ & m\otimes a_1\otimes\dots\otimes a_k\otimes n & \mapsto & g_1[m\otimes a_1\otimes\dots\otimes a_k](n)\end{array}\right.
\]
We check that $g_2$ is involutive:
\begin{multline}
  g_2((m\otimes a_1\otimes \dots \otimes a_k\otimes n)^\star)=g_2(m^\star\otimes a_1\otimes \dots \otimes a_k\otimes n)= \\ = g_1[m^\star\otimes a_1\otimes\dots\otimes a_k](n)=(g_1[m\otimes a_1\otimes\dots\otimes a_k])^\star(n)
  \\
   = (g_1[m\otimes a_1\otimes\dots\otimes a_k](n))^\star=(g_2(m\otimes a_1\otimes \dots\otimes a_k\otimes n))^\star. \nonumber
\end{multline}

The rest of the proof is standard and follows the steps of Theorem 2.75 \cite{Rotman09} or Proposition 2.6.3 \cite{Weibel94}.
\end{proof}



For an $A$-bimodule $M$, let us define $(-)\widetilde{\boxtimes}_\infty M$ as the covariant functor
\begin{displaymath}
\left.\begin{array}{cccc} \overline{A\mhyphen\ibimod} & \xrightarrow{(-)\widetilde{\boxtimes}_\infty M} & \overline{A\mhyphen\ibimod} \\ B & \rightsquigarrow & B\widetilde{\boxtimes}_\infty M\end{array}\right. .
\end{displaymath}

This functor sends a map $B_1\stackrel{f}{\longrightarrow} B_2$ to $B_1\widetilde{\boxtimes}_\infty M\xrightarrow{f\widetilde{\boxtimes}_\infty \id_M} B_2\widetilde{\boxtimes}_\infty M$.
\\\\
The functor $(-)\widetilde{\boxtimes}_\infty M$ is involutive: let us consider an involutive map $f:B_1\to B_2$ and its image under the tensor product functor, $g=f\widetilde{\boxtimes}_\infty \id_M$. Hence:
\begin{equation*}
g((b,a)^\star)=g(b^\star,a)=(f(b^\star),a)=(f(b),a)^\star=(g(b,a))^\star.
\end{equation*}

Given an involutive $A_\infty$-algebra $A$, we say that an involutive $A$-bimodule $F$ is \index{Flat $A_\infty$-bimodule} \textit{flat} if the tensor product functor $(-)\widetilde{\boxtimes}_\infty F:\overline{A\mhyphen \ibimod}\to \overline{A\mhyphen \ibimod}$ is exact, that is: it takes quasi-isomorphisms to quasi-isomorphisms. From now on, we will assume that all the involutive $A$-bimodules are flat.

\begin{lemma}
If $P$ and $Q$ are homotopy equivalent as involutive $A_\infty$-bimodules then, for every involutive $A_\infty$-bimodule $M$, the following quasi-isomorphism in the category of involutive $A_\infty$-bimodules holds: 
$$P\widetilde{\boxtimes}_\infty M\simeq Q\widetilde{\boxtimes}_\infty M.$$
\end{lemma}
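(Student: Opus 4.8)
## Proof proposal

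The plan is to unwind the definition of homotopy equivalence and apply the tensor functor $(-)\widetilde{\boxtimes}_\infty M$ directly to the homotopy data, checking that homotopies are carried to homotopies. Suppose $u:P\to Q$ and $v:Q\to P$ are morphisms of involutive $A_\infty$-bimodules with $v\circ u\sim \id_P$ and $u\circ v\sim \id_Q$; let $h:P\to P$ and $k:Q\to Q$ be the corresponding $A_\infty$-homotopies, so that $\id_P - v\circ u = b^P\circ h + h\circ b^P$ and $\id_Q - u\circ v = b^Q\circ k + k\circ b^Q$ (here I write $b^P, b^Q$ for the differentials on $\ba(P), \ba(Q)$).

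First I would record that $(-)\widetilde{\boxtimes}_\infty M$ is a functor on $\overline{A\mhyphen\ibimod}$, so it sends $u$ and $v$ to morphisms $u\widetilde{\boxtimes}_\infty\id_M$ and $v\widetilde{\boxtimes}_\infty\id_M$ of involutive $A_\infty$-bimodules between $P\widetilde{\boxtimes}_\infty M$ and $Q\widetilde{\boxtimes}_\infty M$, and that it respects composition and identities. Next, applying the functor to the two homotopy identities and using functoriality to distribute $\widetilde{\boxtimes}_\infty\id_M$ across the sums and compositions, I obtain
\begin{displaymath}
\id_{P\widetilde{\boxtimes}_\infty M} - (v\widetilde{\boxtimes}_\infty\id_M)\circ(u\widetilde{\boxtimes}_\infty\id_M) = b^{P\widetilde{\boxtimes}_\infty M}\circ(h\widetilde{\boxtimes}_\infty\id_M) + (h\widetilde{\boxtimes}_\infty\id_M)\circ b^{P\widetilde{\boxtimes}_\infty M},
\end{displaymath}
and symmetrically with $Q$, $k$ in place of $P$, $h$. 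The one genuine verification is that the differential on $\ba(P\widetilde{\boxtimes}_\infty M)$ induced by the tensor construction agrees with the image of $b^P$ under the functor on the relevant factor, i.e. that $(-)\widetilde{\boxtimes}_\infty M$ takes the differential $b^P$ to $b^{P\widetilde{\boxtimes}_\infty M}$ and is compatible with the formation of the quotient defining $\widetilde{\boxtimes}_\infty$; this is exactly what makes $h\widetilde{\boxtimes}_\infty\id_M$ an $A_\infty$-homotopy rather than merely a degree-$1$ map. Granting this, $h\widetilde{\boxtimes}_\infty\id_M$ and $k\widetilde{\boxtimes}_\infty\id_M$ exhibit $(v\widetilde{\boxtimes}_\infty\id_M)\circ(u\widetilde{\boxtimes}_\infty\id_M)\sim\id$ and $(u\widetilde{\boxtimes}_\infty\id_M)\circ(v\widetilde{\boxtimes}_\infty\id_M)\sim\id$, so $P\widetilde{\boxtimes}_\infty M$ and $Q\widetilde{\boxtimes}_\infty M$ are homotopy equivalent as involutive $A_\infty$-bimodules.

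Finally, a homotopy equivalence of complexes is in particular a quasi-isomorphism: the induced maps on homology are mutually inverse since homotopic maps induce the same map on homology and $\id$ induces $\id$. Hence $P\widetilde{\boxtimes}_\infty M\simeq Q\widetilde{\boxtimes}_\infty M$ as claimed, and one does not even need the flatness hypothesis here. The main obstacle is the bookkeeping in the compatibility check mentioned above — verifying that $(-)\widetilde{\boxtimes}_\infty M$ is a \emph{differential} graded functor, i.e. intertwines $b^P$ with $b^{P\widetilde{\boxtimes}_\infty M}$ on the nose and descends correctly to the quotient by the $\star$-relation — but this is essentially the content of the functoriality already asserted for $(-)\widetilde{\boxtimes}_\infty M$ together with the definition of the bar differential, so it amounts to a routine sign-and-quotient check rather than a substantive difficulty.
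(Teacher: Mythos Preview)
Your proposal is correct and follows essentially the same approach as the paper: the paper's proof simply asserts that $h\sim k\Rightarrow h\widetilde{\boxtimes}_\infty\id_M\sim k\widetilde{\boxtimes}_\infty\id_M$ and then observes that the composites $(f\widetilde{\boxtimes}_\infty\id_M)\circ(g\widetilde{\boxtimes}_\infty\id_M)$ and $(g\widetilde{\boxtimes}_\infty\id_M)\circ(f\widetilde{\boxtimes}_\infty\id_M)$ are homotopic to the identities because $f\circ g\sim\id_Q$ and $g\circ f\sim\id_P$. Your version is a more explicit unpacking of exactly this argument, including the (correct) remark that flatness is not needed here.
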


\begin{proof}
Let $f:P\leftrightarrows Q:g$ be a homotopy equivalence. It is clear that 
$$h\sim k\Rightarrow h\widetilde{\boxtimes}_\infty\id_M\sim k\widetilde{\boxtimes}_\infty\id_M.$$
Therefore, we have:
\begin{displaymath}
\left.\begin{array}{ccccc}P\widetilde{\boxtimes}_\infty M & \to & Q\widetilde{\boxtimes}_\infty M & \to & P\widetilde{\boxtimes}_\infty M \\p\widetilde{\boxtimes}a & \mapsto & f(p)\widetilde{\boxtimes}a & \mapsto & g(f(p))\widetilde{\boxtimes}a\end{array}\right.
\end{displaymath}
and 
\begin{displaymath}
\left.\begin{array}{ccccc}Q\widetilde{\boxtimes}_\infty M & \to & P\widetilde{\boxtimes}_\infty M & \to & Q\widetilde{\boxtimes}_\infty M \\q\widetilde{\boxtimes}a & \mapsto & g(q)\widetilde{\boxtimes}a & \mapsto & f(g(q))\widetilde{\boxtimes}a\end{array}\right.
\end{displaymath}
\\
the result follows since $f\circ g\sim \id_Q$ and $g\circ f\sim \id_P$.
\end{proof}

\begin{lemma} \label{homoinv}
Let $A$ be an involutive $A_\infty$-algebra. If $P$ and $Q$ are homotopy equivalent as involutive $A$-bimodules then, for every involutive $A$-bimodule $M$, the following quasi-isomorphism holds:
$$\homo_{\overline{A\mhyphen\ibimod}}(P,M)\simeq\homo_{\overline{A\mhyphen\ibimod}}(Q,M).$$
\end{lemma}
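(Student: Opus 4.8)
The plan is to run the contravariant analogue of the proof of the previous lemma, using the functor $\homo_{\overline{A\mhyphen\ibimod}}(-,M)$ constructed above: it will be shown that this functor carries a homotopy equivalence of involutive $A$-bimodules to a homotopy equivalence of involutive complexes, and a homotopy equivalence of complexes is in particular a quasi-isomorphism (homotopic maps induce the same map on homology and the identity induces the identity), whence the claim.

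Two facts are needed. First, for a morphism $f\colon F\to G$ of involutive $A_\infty$-bimodules the induced map $f^{*}:=\homo_{\overline{A\mhyphen\ibimod}}(f,M)\colon\phi\mapsto\phi\circ f$ is a morphism of involutive complexes and the assignment $f\mapsto f^{*}$ is contravariantly functorial, $(g\circ f)^{*}=f^{*}\circ g^{*}$ and $\id^{*}=\id$: involutivity of $f^{*}$ was already verified above, functoriality is formal, and compatibility with the differentials $d\phi=b^{M}\circ\phi-(-1)^{|\phi|}\phi\circ b$ reduces to the identity $b^{G}\circ f=f\circ b^{F}$, which holds because $f$ is a morphism of $A_\infty$-bimodules. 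Second, if $\ell\colon P\to Q$ is an $A_\infty$-homotopy between $u,v\colon P\to Q$, i.e. $u-v=b^{Q}\circ\ell+\ell\circ b^{P}$, then the map $L\colon\homo_{\overline{A\mhyphen\ibimod}}(Q,M)\to\homo_{\overline{A\mhyphen\ibimod}}(P,M)$ given by $L(\phi)=(-1)^{|\phi|}\,\phi\circ\ell$ satisfies $u^{*}-v^{*}=d\circ L+L\circ d$, so $u^{*}\sim v^{*}$. This second point is the only genuine computation: expanding $d(L\phi)+L(d\phi)$ with $d\phi=b^{M}\circ\phi-(-1)^{|\phi|}\phi\circ b$, the two terms carrying $b^{M}$ cancel and one is left with $\phi\circ(b^{Q}\circ\ell+\ell\circ b^{P})=\phi\circ(u-v)=u^{*}(\phi)-v^{*}(\phi)$. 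It is also the one delicate step: the sign conventions of the remark recalled above from \cite{KoSo09} must be pinned down so that the exponent on $\phi\circ\ell$ makes the cancellation go through; this is exactly parallel to the (unspelled) implication $h\sim k\Rightarrow h\widetilde{\boxtimes}_\infty\id_M\sim k\widetilde{\boxtimes}_\infty\id_M$ used in the previous proof, and I would treat it with the same brevity.

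Granting these, let $f\colon P\leftrightarrows Q\colon g$ realise the homotopy equivalence, so $g\circ f\sim\id_{P}$ and $f\circ g\sim\id_{Q}$. Applying $\homo_{\overline{A\mhyphen\ibimod}}(-,M)$ yields morphisms of involutive complexes $f^{*}\colon\homo_{\overline{A\mhyphen\ibimod}}(Q,M)\to\homo_{\overline{A\mhyphen\ibimod}}(P,M)$ and $g^{*}$ in the opposite direction, with $f^{*}\circ g^{*}=(g\circ f)^{*}\sim\id$ and $g^{*}\circ f^{*}=(f\circ g)^{*}\sim\id$. Hence $f^{*}$ is a homotopy equivalence, therefore a quasi-isomorphism, giving $\homo_{\overline{A\mhyphen\ibimod}}(P,M)\simeq\homo_{\overline{A\mhyphen\ibimod}}(Q,M)$. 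The main obstacle is thus not conceptual but lies in the sign bookkeeping of the second fact above.
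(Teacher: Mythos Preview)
Your proposal is correct and follows essentially the same route as the paper: take a homotopy equivalence $f:P\leftrightarrows Q:g$, apply the contravariant functor $\homo_{\overline{A\mhyphen\ibimod}}(-,M)$, and verify that the induced maps are homotopy inverse to each other. The paper's proof is slightly more terse---it writes the induced maps on homotopy classes $[-,-]$ and simply asserts $\beta\circ g\circ f\sim\beta$ and $\alpha\circ f\circ g\sim\alpha$ without unpacking the chain homotopy---whereas you make the homotopy $L(\phi)=(-1)^{|\phi|}\phi\circ\ell$ explicit and track the sign cancellation; this is the same argument spelled out in greater detail.
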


\begin{proof}
Consider $f:P\to Q$ a homotopy equivalence and let $g:Q\to P$ be its homotopy inverse. If $[-,-]$ denotes the homotopy classes of morphisms, then both $f$ and $g$ induce the following maps:

\begin{displaymath}
\begin{array}{cccc}f_\star: & [P,M] & \rightarrow & [Q,M] \\ & \alpha & \mapsto & \alpha\circ g\end{array}
\end{displaymath}
\begin{displaymath}
\begin{array}{cccc}g_\star: & [Q,M] & \rightarrow & [P,M] \\ & \beta & \mapsto & \beta\circ f\end{array}
\end{displaymath}

Now we have:
\begin{displaymath}
f_\star\circ g_\star\circ\beta=f_\star\circ \beta\circ f=\beta\circ g\circ f\sim\beta;
\end{displaymath}
\begin{displaymath}
g_\star\circ f_\star\circ\alpha=g_\star\circ \alpha\circ g=\alpha\circ f\circ g\sim\alpha. \qedhere
\end{displaymath}
\end{proof}







\section{Involutive Hochschild homology and cohomology} 

\subsection{Hochschild homology for involutive $A_\infty$-algebras} 

We define the \index{Involutive Hochschild chains} \textit{involutive Hochschild chain complex} of an involutive $A_\infty$-algebra $A$ with coefficients in an $A$-bimodule $M$ as follows:
$$C^{\text{inv}}_\bullet(M,A)=M\widetilde{\boxtimes}_\infty \ba(A).$$

The differential is the same given in Section 7.2.4 \cite{KoSo09}.
The involutive Hochschild homology of $A$ with coefficients in $M$ is 
$$\hh_n(M,A)=\h C^{\text{inv}}_n(M,A).$$ 

\begin{lemma}
For an involutive $A_\infty$-algebra $A$ and a flat $A$-bimodule $M$, the following quasi-isomorphism holds: 
$$C^{\text{inv}}_{\bullet}(M,A)\simeq M\widetilde{\boxtimes}_\infty A.$$
\end{lemma}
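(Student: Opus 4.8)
The plan is to realise $C^{\text{inv}}_\bullet(M,A)=M\widetilde{\boxtimes}_\infty\ba(A)$ as the functor $M\widetilde{\boxtimes}_\infty(-)$ applied to the involutive bar resolution of $A$, and then to collapse that resolution onto $A$ itself. Concretely, the second slot $\ba(A)$ carries the augmentation $\varepsilon:\ba(A)\to A$, a morphism of involutive $A_\infty$-bimodules assembled from the suspension map $s^{-1}$ and the counit of $\widehat{T}SA$, hence compatible with the order-reversing involution of the bar construction and with the involution of $A$. Applying $M\widetilde{\boxtimes}_\infty(-)$ yields a morphism of complexes
$$\id_M\widetilde{\boxtimes}_\infty\varepsilon:\ M\widetilde{\boxtimes}_\infty\ba(A)\ \longrightarrow\ M\widetilde{\boxtimes}_\infty A,$$
and the lemma reduces to two assertions: that $\varepsilon$ is a quasi-isomorphism, and that $M\widetilde{\boxtimes}_\infty(-)$ preserves quasi-isomorphisms.

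The second assertion is where the standing flatness hypothesis on $M$ enters: flatness of $M$ says precisely that $(-)\widetilde{\boxtimes}_\infty M$ is exact. To move the flat slot to the right I would use the order-reversing flip
$$m\otimes a_1\otimes\dots\otimes a_k\otimes n\ \longmapsto\ n\otimes a_k^\star\otimes\dots\otimes a_1^\star\otimes m,$$
which is well defined on the quotients defining $\widetilde{\boxtimes}_\infty$ (it interchanges the two sides of the defining relation), is involutive, and intertwines the differentials of Section 7.2.4 \cite{KoSo09} with the expected Koszul signs; hence it is a natural isomorphism $X\widetilde{\boxtimes}_\infty M\cong M\widetilde{\boxtimes}_\infty X$, so $M\widetilde{\boxtimes}_\infty(-)$ is exact as well. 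Applying this to the quasi-isomorphism $\varepsilon$ then gives the statement. Alternatively, once one knows $\ba(A)\to A$ is a homotopy equivalence of involutive $A_\infty$-bimodules, one may invoke the homotopy-invariance lemma above instead of flatness.

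The main obstacle is the first assertion: that $\varepsilon:\ba(A)\to A$ is a quasi-isomorphism of involutive $A_\infty$-bimodules. In the non-involutive setting this is the classical fact that the two-sided bar construction is a resolution of $A$ as a bimodule over itself (the associative prototype is in \cite{GeJo90}, and it underlies the differential recalled from Section 7.2.4 \cite{KoSo09}), which I would simply quote. To transport it to the involutive world I would argue exactly as in \propref{alterinfi} and the propositions following it: the augmentation, the word-length filtration on $\widehat{T}SA$, and the contracting homotopy witnessing acyclicity of the associated graded are all built from the structure maps $b_n$, the counit and $s^{\pm1}$, each of which is involutive, so the whole argument is $\star$-equivariant. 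The point that needs care is that the usual ``prepend the unit'' contraction is not compatible with the order-reversing involution; I would either symmetrise it or run the acyclicity argument directly on the associated graded, where $\star$ merely permutes the tensor factors. Granting this, the two assertions combine to give $C^{\text{inv}}_\bullet(M,A)=M\widetilde{\boxtimes}_\infty\ba(A)\simeq M\widetilde{\boxtimes}_\infty A$.
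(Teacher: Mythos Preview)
Your approach is essentially the same as the paper's: both use the quasi-isomorphism $\ba(A)\simeq A$ (which the paper simply cites from \cite{Ferrario12}) and then invoke flatness of $M$ to pass through $M\widetilde{\boxtimes}_\infty(-)$. You are in fact more careful than the paper's one-line argument, since you notice and address the left/right mismatch between the definition of flatness (exactness of $(-)\widetilde{\boxtimes}_\infty M$) and what is actually needed (exactness of $M\widetilde{\boxtimes}_\infty(-)$), a point the paper passes over in silence.
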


\begin{proof}
The result follows from:
\begin{equation*}
M\widetilde{\boxtimes}_\infty A \simeq M\widetilde{\boxtimes}_\infty \ba(A)=C^{\text{inv}}_\bullet(M,A).
\end{equation*}
Observe that we are using that $M$ is flat and that there is a quasi-isomorphism between $\ba(A)$ and $A$ (Proposition 2, Section 2.3.1 \cite{Ferrario12}). 
\end{proof}



\subsection{Hochschild cohomology for involutive $A_\infty$-algebras} 

The \index{Involutive Hochschild cochains} \textit{involutive Hochschild cochain complex} of an involutive $A_\infty$-algebra $A$ with coefficients on an $A$-bimodule $M$ is defined as the $\K$-vector space
\begin{displaymath}
C_{\text{inv}}^\bullet(A,M):=\homo_{\overline{A\mhyphen\ibimod}}(\ba(A), M),
\end{displaymath} 
with the differential defined in section 7.1 of \cite{KoSo09}.

\begin{proposition}
For an involutive $A_\infty$-algebra $A$ and an $A$-bimodule $M$, we have the following quasi-isomorphism: $C_{\text{inv}}^{\bullet}(A,M)\simeq\homo_{\overline{A\mhyphen\ibimod}}(A,M)$.
\end{proposition}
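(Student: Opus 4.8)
The plan is to deduce this statement from the homological algebra already developed for the involutive tensor product, exploiting the fact that $\homo_{\overline{A\mhyphen\ibimod}}(\ba(A),M)$ and $\homo_{\overline{A\mhyphen\ibimod}}(A,M)$ are computed from homotopy-equivalent arguments in the first slot. The key input is the quasi-isomorphism $\ba(A)\simeq A$ of involutive $A$-bimodules (Proposition 2, Section 2.3.1 \cite{Ferrario12}, used already in the Hochschild-homology lemma above), together with \lemref{homoinv}, which says that $\homo_{\overline{A\mhyphen\ibimod}}(-,M)$ sends homotopy equivalences of involutive $A$-bimodules to quasi-isomorphisms.

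First I would record that $C_{\text{inv}}^{\bullet}(A,M)=\homo_{\overline{A\mhyphen\ibimod}}(\ba(A),M)$ by definition, with the differential of Section 7.1 of \cite{KoSo09}, which is exactly the differential on the $\homo$-complex described in the Remark after \lemref{homoinv} (the one sending $\{f_i\}$ to $\{b^M\circ f_i-(-1)^nf_{i+1}\circ b^{\ba(A)}\}$). Next I would invoke the bar resolution: $\ba(A)\to A$ is a quasi-isomorphism of involutive $A$-bimodules, and in fact a homotopy equivalence of involutive $A_\infty$-bimodules — the contracting homotopy on $\ba(A)=A\otimes_{\K}\widehat{T}SA\otimes_{\K}A$ is the standard extra-degeneracy homotopy, which one checks is compatible with the involution $\star$ on tensor strings (it is built from insertion of the unit and is symmetric under reversal, so it commutes with $(-)^\star$ up to the usual signs). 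Applying \lemref{homoinv} with $P=\ba(A)$, $Q=A$ then yields the desired quasi-isomorphism
$$\homo_{\overline{A\mhyphen\ibimod}}(\ba(A),M)\simeq\homo_{\overline{A\mhyphen\ibimod}}(A,M),$$
i.e. $C_{\text{inv}}^{\bullet}(A,M)\simeq\homo_{\overline{A\mhyphen\ibimod}}(A,M)$.

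The main obstacle is the one point that \lemref{homoinv} does not quite hand us for free: one must verify that $\ba(A)$ and $A$ are genuinely \emph{homotopy equivalent as involutive $A$-bimodules}, not merely quasi-isomorphic, since \lemref{homoinv} is stated for homotopy equivalences. Concretely this means producing involutive bicomodule maps $\ba(A)\leftrightarrows A$ and an involutive homotopy realizing the extra degeneracy, and checking that the reversal-type involution $(a_1\otimes\cdots\otimes a_k)^\star=(a_k^\star\otimes\cdots\otimes a_1^\star)$ is respected by all the structure maps — essentially the same involution bookkeeping carried out in the Propositions on $b_n$ preserving involutions earlier in the paper. Once that compatibility is in hand, the rest is the formal argument above. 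Alternatively, if one only wants the quasi-isomorphism statement and $M$ is assumed flat, one can run the argument dual to the preceding lemma on $C^{\text{inv}}_\bullet(M,A)$: resolve the first argument, use that $\homo_{\overline{A\mhyphen\ibimod}}(-,M)$ is exact on the relevant homotopy category, and conclude. I would present the homotopy-equivalence route as the main proof and remark on the flat-resolution variant.
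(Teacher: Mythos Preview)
Your approach is correct in spirit but takes a different route from the paper's own argument. You work at the level of involutive $A$-bimodules: you want to feed the homotopy equivalence $\ba(A)\simeq A$ directly into \lemref{homoinv} and conclude. The paper instead unwinds the definition of $\homo_{\overline{A\mhyphen\ibimod}}(-,M)$ all the way down to $\ivect$: by the Remark in \secref{infbim},
\[
\homo_{\overline{A\mhyphen\ibimod}}(P,M)=\underline{\homo}_{\ivect}^{\,\bullet}\bigl(\ba(A)\otimes_{\K}SP\otimes_{\K}\ba(A),\ \ba(A)\otimes_{\K}SM\otimes_{\K}\ba(A)\bigr),
\]
so the comparison between $P=\ba(A)$ and $P=A$ becomes the replacement of $S\ba(A)$ by $SA$ in the first argument of an $\ivect$-$\homo$. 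The paper then invokes that $S\ba(A)\simeq SA$ in $\ivect$ and that everything in sight is projective in $\ivect$, so the functors involved are exact and the quasi-isomorphism passes through.

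What each buys: the paper's route is lighter on hypotheses, since it only needs the quasi-isomorphism $\ba(A)\simeq A$ at the level of involutive graded vector spaces (over a field) and elementary projectivity in $\ivect$; it never asks whether $\ba(A)$ and $A$ are homotopy equivalent \emph{as involutive $A$-bimodules}. Your route is more conceptual and makes \lemref{homoinv} do the work, but as you correctly flag, it requires upgrading the bar quasi-isomorphism to a genuine involutive $A$-bimodule homotopy equivalence. Note that the classical extra-degeneracy homotopy is only one-sided $A$-linear, not an $A$-bimodule map, so the ``insertion of the unit'' argument does not directly give an $A$-bimodule homotopy; one really has to use that over a field $A_\infty$-quasi-isomorphisms of bimodules are homotopy equivalences, and then check involution-compatibility. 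That is doable but is extra input the paper's $\ivect$-level argument avoids.
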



\begin{proof}
The result follows from:
\begin{multline*}
C_{\text{inv}}^{\bullet}(A,M)=\homo_{\overline{A\mhyphen\ibimod}}(\ba(A), M):= \\
\homo^n_{\ivect}(\ba(A)\otimes_{\K}S\ba(A)\otimes_{\K}\ba(A), \ba(A)\otimes_{\K}SM\otimes_{\K}\ba(A)) \stackrel{(!)}{\simeq} \\
\homo^n_{\ivect}(\ba(A)\otimes_{\K}SA\otimes_{\K}\ba(A), \ba(A)\otimes_{\K}SM\otimes_{\K}\ba(A))=: \\
\homo_{\overline{A\mhyphen\ibimod}}(A, M).
\end{multline*}
Here (!) points out the fact that $S\ba(A)$ is a projective resolution of $SA$ in $\ivect$ and hence we have the quasi-isomorphism $S\ba(A)\simeq SA$. Observe that $S\ba(A)$ is projective in $\ivect$, therefore the involved functors in the proof are exact and preserve quasi-isomorphisms. \qedhere

\end{proof}





\bibliographystyle{amsalpha}
\bibliography{/Users/Ramses/Documents/Universitat/Paper_Algebras/Bibliografia}

\end{document}